\theoremstyle{definition}
\newtheorem{definition}{Definition}
\theoremstyle{theorem}
\newtheorem{theorem}{Theorem}
\newtheorem{lemma}{Lemma}
\newtheorem{corollary}{Corollary}
\newcommand{\C}{\mathbb{C}}
\newcommand{\D}{\mathbb{D}}
\title{A simple proof of necessity in the McCullough-Quiggin theorem}
\author{Greg Knese}
\date{\today}                                           
\address{Washington University in St. Louis \\ Department of Mathematics \& Statistics\\
One Brookings Drive \\ St. Louis, MO 63130}
\email{geknese@wustl.edu}
\thanks{Partially supported by NSF grant DMS-1900816}
\subjclass[2010]{Primary 46E22, 47A57}
\keywords{Complete Pick kernel, reproducing kernel Hilbert space}
\begin{document}
\maketitle

\begin{abstract}
A short and simple proof of necessity in the McCullough-Quiggin
characterization of positive semi-definite kernels
with the complete Pick property is presented.
\end{abstract}

\section{Introduction}

Given $N$ points $z_1,\dots,z_N$ in the unit
disk $\D \subset\C$ and $N$ ``targets'' $w_1,\dots, w_N\in \C$, 
when does there exist a holomorphic function 
$f:\D\to \overline{\D}$ that interpolates: $f(z_i)=w_i$ for
$i=1,\dots, N$?  Pick's theorem of 1916 says this
interpolation can be done if and only if the matrix
\[
\left( \frac{1-w_i \bar{w}_j}{1-z_i\bar{z}_j}\right)_{i,j} \text{ is positive semi-definite \cite{Pick}.}
\]
In the hundred intervening years, this theorem
has been generalized and reinterpreted a number of ways;
see \cite{McC}.
The theorem generalizes to matrix valued functions, and
it can be reinterpreted as a special property
of multipliers of reproducing kernel Hilbert spaces.  
The remarkable McCullough-Quiggin theorem
precisely describes which reproducing kernel
Hilbert spaces have this property.
It is not our goal to delve into
motivation and background, as this
is already done in the paper \cite{AM2000}
and related book \cite{AMbook}.
Instead, our goal is to give a short
and simple
proof of necessity in the McCullough-Quiggin theorem.

Let $k:X\times X \to \C$ be a positive semi-definite (PSD)
kernel on $X$ with associated 
reproducing kernel Hilbert space $H$. 
Let $L_1,L_2$ be auxiliary Hilbert spaces and 
let $B(L_1,L_2)$ denote the bounded
linear operators from $L_1$ to $L_2$.
A function $\Phi: X\to B(L_1,L_2)$ is a multiplier
of norm at most one, i.e.\ belongs to 
$\text{Mult}_1(H\otimes L_1, H\otimes L_2)$,
if and only if for every $f\in H\otimes L_1$
we have $\Phi f \in H\otimes L_2$ and $\|\Phi f\| \leq \|f\|$.
This property is equivalent to the property that the operator valued kernel
\[
(I-\Phi(x)\Phi(y)^*) k(x,y) \text{ is PSD}.
\]

\begin{definition}
A kernel $k$ has the \emph{complete Pick property} if for
all natural numbers $s,t$, 
whenever $S\subset X$ is finite
and $W\in \text{Mult}_1(H|_S\otimes \C^t, H|_S\otimes \C^s)$
then there exists 
$\Phi \in \text{Mult}_1(H\otimes \C^t, H\otimes \C^s)$
such that $\Phi|_S = W$.
\end{definition}


\begin{definition}
The kernel $k$ is irreducible if $k(x,y) \ne 0$
for all $x,y \in X$ and $k_x,k_y$ are linearly
independent for $x\ne y$.
\end{definition}

\begin{theorem}[McCullough \cites{McCullough1, McCullough2}, Quiggin \cite{Quiggin1}] \label{mainthm}
Let $k:X\times X\to \C$ be PSD and irreducible.  
Then, $k$ has the complete Pick property if and only if
for each $z\in X$
\[
F_z(x,y) := 1 - \frac{k(x,z) k(z,y)}{k(z,z) k(x,y)} 
\text{ is PSD.}
\]
\end{theorem}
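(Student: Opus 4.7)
The plan is to prove that $F_z|_S$ is PSD for every finite $S \subset X$ by induction on $|S|$, using the complete Pick hypothesis in the inductive step. The base case $|S| = 1$ is Cauchy--Schwarz in $H$: $F_z(x, x) = 1 - |k(x, z)|^2/(k(z,z) k(x,x)) \geq 0$.

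The main technical tool will be a Schur-complement identity. For a contractive row-vector multiplier $\Phi \in \text{Mult}_1(H \otimes \C^m, H)$ with $\Phi(z) = 0$, the Pick positivity $[(1 - \Phi(x)\Phi(y)^*)k(x,y)]_{x,y \in \{z\} \cup S} \succeq 0$ is equivalent, via the Schur complement against the $(z,z)$-block $k(z,z)$, to
\[
\bigl[(F_z(x, y) - \Phi(x)\Phi(y)^*)k(x, y)\bigr]_{x, y \in S} \succeq 0.
\]

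For the inductive step, I assume $F_z|_{S_k}$ is PSD on $S_k = \{x_1, \ldots, x_k\}$ and fix a new point $x_{k+1} \in X$. By the inductive hypothesis, I Gram-factor $F_z(x_i, x_j) = \vec a_i \vec a_j^*$ with $\vec a_i \in \C^{1 \times m}$ (rows of a Cholesky factor). I define $W$ on $\{z\} \cup S_k$ by $W(z) = 0$ and $W(x_i) = \vec a_i$; the Schur reformulation shows $W$ is Pick-consistent because the reformulated matrix $[(F_z(x_i, x_j) - \vec a_i \vec a_j^*)k(x_i, x_j)]$ is identically zero. Complete Pick then extends $W$ to a contractive $\Phi : X \to \C^{1 \times m}$. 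Setting $\vec c := \Phi(x_{k+1})$, the two-point Pick at $\{z, x_{k+1}\}$ gives $\|\vec c\|^2 \leq F_z(x_{k+1}, x_{k+1})$.

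Applying the Schur reformulation to $\Phi$ on $\{z\} \cup S_{k+1}$ produces a PSD matrix on $S_{k+1}$ whose $k \times k$ principal block on $S_k$ is zero by construction. A PSD matrix with a zero diagonal block has vanishing adjacent off-diagonal entries (a standard PSD Cauchy--Schwarz argument), so $(F_z(x_i, x_{k+1}) - \vec a_i \vec c^*)k(x_i, x_{k+1}) = 0$ for each $i \in S_k$; since $k(x_i, x_{k+1}) \neq 0$ by irreducibility, $\vec a_i \vec c^* = F_z(x_i, x_{k+1})$. The Gram matrix $G = [\vec a_i \vec a_j^*]_{i, j \in S_{k+1}}$ (setting $\vec a_{k+1} := \vec c$) is therefore PSD and coincides with $F_z|_{S_{k+1}}$ except at the $(k+1, k+1)$-entry, where $G_{k+1, k+1} = \|\vec c\|^2 \leq F_z(x_{k+1}, x_{k+1})$. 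Hence $F_z|_{S_{k+1}} = G + (F_z(x_{k+1}, x_{k+1}) - \|\vec c\|^2) E_{k+1, k+1}$ is a sum of PSD matrices, completing the induction. The main subtlety is recognizing that the zero diagonal block on $S_k$ automatically forces the desired Gram-type identity on the extended set; complete Pick is used only once per inductive step to produce $\vec c$.
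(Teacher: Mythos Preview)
Your argument is correct, and it shares the paper's overall shape---an induction on the number of points, a Gram factorization of $F$ on a smaller set, an appeal to the complete Pick property to extend the resulting row-vector function to a global contractive multiplier, and then a positivity computation on the enlarged set---but the technical engine is different. The paper lets the basepoint \emph{move}: it assumes $F_{x_N}$ is PSD on $\{x_1,\dots,x_{N-1}\}$, observes that $F_{x_N}(x_i,x_N)=0$ so the matrix extends by zeros to $\{x_1,\dots,x_N\}$, Gram-factors, extends via complete Pick, and then invokes the lemma that a multiplier of $H$ is automatically a multiplier of $H_{x_{N+1}}$ (the subspace vanishing at $x_{N+1}$); a final Schur product with the rank-one kernel $\tfrac{k_{NN}}{k_{iN}k_{Nj}}$ strips off the prefactor and yields $F_{x_{N+1}}$ on $\{x_1,\dots,x_N\}$ directly. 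You instead keep $z$ \emph{fixed}, force $\Phi(z)=0$, and use the Schur complement at $z$, which produces the matrix $\bigl[(F_z-\Phi\Phi^*)k\bigr]$ on $S_{k+1}$; you then need the extra observation that its vanishing diagonal block on $S_k$ kills the adjacent off-diagonals, and a final diagonal correction. Your route is arguably more elementary (no reproducing-kernel interpretation of $k^z$ as the kernel of $H_z$ is needed, only the block-matrix Schur complement), and it keeps the statement ``$F_z$ is PSD for this particular $z$'' front and center. The paper's route is a bit slicker at the end---no zero-block/off-diagonal argument, no diagonal correction---at the cost of the rolling-basepoint induction and the auxiliary lemmas on $H_z$.
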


Agler-McCarthy \cite{AM2000}
first formulated and proved the theorem in precisely 
the form above.  
The original proofs
dig into precisely what needs to be satisfied
in order to extend a multiplier on
a set of points to one more point
(``the one point extension property'').
Some sort of axiom of choice is 
invoked to build a multiplier on all 
of $X$.
The proof of necessity given below is a short
inductive proof.  
One innovation worth mentioning is that we do not use 
full irreducibility of $k$; we only need
$k$ to be non-vanishing. 
A straightforward proof of sufficiency
is due to Ball-Trent-Vinnikov \cite{BTV}.
It proves a realization formula for multipliers
that builds a multiplier directly and uses
Hilbert space geometry instead of the axiom of 
choice. 
The master's thesis \cite{Marx} contains
a nice treatment.

While this paper is about a foundational aspect
of complete Pick kernels, a lot of fascinating work has been
done in recent years on more advanced aspects
of these kernels and related Drury-Arveson type spaces.
See \cites{Aleman, CH, JM} for a sampling of
 some recent literature. In particular, the
 paper \cite{CH} contains an interesting
 characterization of the complete Pick
 property in terms of completely
 contractive embeddings.

We shall assume knowledge of the rudiments
of vector-valued reproducing kernel Hilbert spaces 
including multipliers and the
Schur product theorem. See \cites{AMbook, PR}.

\section*{Acknowledgements}

The proof below was found while
teaching a topics course on reproducing
kernel Hilbert spaces at Washington University
in the Spring of 2019.
I thank the students in that course for
letting me test the proof out on them.
I thank Michael Hartz, John McCarthy, 
and Scott McCullough for helpful discussions.



\section{Proof of Necessity}
We need three basic lemmas which apply 
to any PSD kernel $k$ with associated reproducing kernel
Hilbert space $H$ such that $k(z,z) \ne 0$ for
some fixed $z\in X$.  Set 
\[
k^z(x,y) := k(x,y) - \frac{k(x,z)k(z,y)}{k(z,z)}.
\]

\begin{lemma} \label{formula}
Consider the closed subspace 
$H_z = \{f\in H: f(z) = 0\}.$
The reproducing kernel for $H_z$ is $k^z(x,y)$.
\end{lemma}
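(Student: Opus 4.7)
The plan is to identify the reproducing kernel of $H_z$ by computing the orthogonal projection onto $H_z$ of the kernel functions of $H$. First I would verify that $H_z$ is a closed subspace, which is immediate since it is the kernel of the continuous linear functional $f \mapsto f(z) = \langle f, k_z\rangle$, where $k_z := k(\cdot, z) \in H$. The hypothesis $k(z,z)\neq 0$ guarantees $k_z \neq 0$, and one checks at once that $H_z^\perp$ is the one-dimensional span of $k_z$: indeed, $f \in H_z$ iff $\langle f, k_z\rangle = 0$.

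Let $P$ be the orthogonal projection of $H$ onto $H_z$. Projecting off the $k_z$ direction gives the explicit formula
\[
Pg = g - \frac{\langle g, k_z\rangle}{\|k_z\|^2} k_z = g - \frac{g(z)}{k(z,z)}\, k_z.
\]
Applying $P$ to a kernel function $k_y = k(\cdot,y)$ produces an element $Pk_y \in H_z$, and for any $f \in H_z$ the self-adjointness of $P$ together with $Pf = f$ and the reproducing property in $H$ yield
\[
\langle f, Pk_y\rangle_H = \langle Pf, k_y\rangle_H = \langle f, k_y\rangle_H = f(y).
\]
Hence $Pk_y$ is the reproducing kernel of $H_z$ at $y$.

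The last step is to read off the formula. Evaluating the explicit expression for $Pk_y$ at the point $x$ gives
\[
(Pk_y)(x) = k(x,y) - \frac{k_y(z)}{k(z,z)}\, k_z(x) = k(x,y) - \frac{k(z,y)\,k(x,z)}{k(z,z)} = k^z(x,y),
\]
as desired. The approach is entirely routine once the orthogonal complement is identified; the only point requiring a moment's thought is the initial observation that $H_z^\perp = \mathbb{C} k_z$, which relies precisely on the hypothesis $k(z,z) \neq 0$.
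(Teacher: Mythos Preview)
Your proof is correct and follows essentially the same approach as the paper: both arguments rest on the orthogonal decomposition $H = H_z \oplus \C k_z$ and identify $k^z_y$ as the projection of $k_y$ onto $H_z$. Your version is simply more explicit about the projection formula and the verification that $Pk_y$ reproduces, whereas the paper's proof compresses this into the observation that the reproducing kernel of $\C k_z$ is $k(z,z)^{-1}k_z(x)\overline{k_z(y)}$ and subtracts.
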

\begin{proof}
Note $H = H_z \oplus \C k_z$.  
The reproducing kernel for $\C k_z$ is
simply $k(z,z)^{-1} k_z(x) \overline{k_z(y)}$.  
Then, $k^z_y(x) = k^z(x,y)$ as defined above
belongs to $H_z$ and reproduces elements of $H_z$.
\end{proof}

\begin{lemma}
Let $L$ be a Hilbert space.
Given $f \in H \otimes L$, we have $f(z) = \vec{0}$
if and only if $f \in H_z\otimes L$.  
\end{lemma}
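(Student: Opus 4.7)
The plan is to leverage the orthogonal decomposition $H = H_z \oplus \mathbb{C} k_z$ recorded in the proof of Lemma~\ref{formula}. Tensoring with $L$ yields the orthogonal decomposition
\[
H \otimes L = (H_z \otimes L) \oplus (\mathbb{C} k_z \otimes L),
\]
where the second summand is naturally isometric to $L$ via $k_z \otimes v \mapsto k(z,z)^{1/2} v$. The goal is then to show that the functional ``$f \mapsto f(z)$'' detects precisely the first summand.

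First I would dispatch the easy implication. If $f \in H_z \otimes L$, approximate $f$ by finite sums $\sum_i h_i \otimes v_i$ with $h_i \in H_z$, so each $h_i(z) = 0$ and the partial sums evaluate to $\vec{0}$ at $z$. Passing to the limit using the vector-valued reproducing property
\[
\langle f(z), v \rangle_L = \langle f, k_z \otimes v \rangle_{H \otimes L} \quad (v \in L),
\]
which shows evaluation at $z$ is a bounded map $H \otimes L \to L$, gives $f(z) = \vec{0}$.

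For the converse, decompose an arbitrary $f \in H \otimes L$ as $f = g + k_z \otimes v$ with $g \in H_z \otimes L$ and $v \in L$, using the orthogonal decomposition above. By the easy direction just proved, $g(z) = \vec{0}$. On the other hand $(k_z \otimes v)(z) = k_z(z) v = k(z,z) v$, so $f(z) = k(z,z)\, v$. Since $k(z,z) \ne 0$ by the standing assumption of this section, the hypothesis $f(z) = \vec{0}$ forces $v = 0$, and hence $f = g \in H_z \otimes L$.

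The only step demanding a small amount of care is justifying $f(z)$ as a well-defined element of $L$ for a general (not necessarily elementary-tensor) $f \in H \otimes L$, and verifying that the decomposition respects it. Both issues are settled uniformly by the reproducing formula $\langle f(z), v \rangle_L = \langle f, k_z \otimes v \rangle$, which simultaneously gives continuity of evaluation and the compatibility $f(z) = g(z) + k(z,z) v$ used above.
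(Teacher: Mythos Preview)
Your proof is correct. Both arguments rest on the orthogonal decomposition $H = H_z \oplus \mathbb{C} k_z$ tensored with $L$, and the easy direction is handled the same way. For the converse, however, the paper proceeds differently: it lets $P$ be the orthogonal projection onto $H_z\otimes L$ and, using the explicit kernel formula $k^z_y = k_y - \frac{k(z,y)}{k(z,z)} k_z$ from Lemma~\ref{formula}, checks that $\langle f, k^z_y\otimes v\rangle = \langle f(y),v\rangle_L = \langle Pf(y),v\rangle_L$ for all $y$ and $v$, whence $f = Pf$. You instead write $f = g + k_z\otimes v$ with $g\in H_z\otimes L$ and evaluate at $z$ to force $v=0$ via $k(z,z)\ne 0$. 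Your route is a bit more elementary in that it never invokes the formula for $k^z$; the paper's route, on the other hand, illustrates how the reproducing kernel of $H_z$ reproduces vector-valued functions, which fits the surrounding narrative.
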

\begin{proof}
Evidently, $f\in H_z\otimes L$ implies $f(z)=\vec{0}$
because the tensor project is a closed
span of elements with this property.
Conversely, let $P$ be the orthogonal projection from $H\otimes L$
to $H_z\otimes L$.
Let $f\in H\otimes L$ with $f(z)=\vec{0}$.
Note by Lemma \ref{formula}
\[
\langle f, k^z_y\otimes v\rangle = 
\langle f, (k_y - k_z \frac{k(z,y)}{k(z,z)})\otimes v\rangle=
\langle f(y), v\rangle_L.
\]
On the other hand, since
$k^z_y \otimes v = P(k_y^z\otimes v) \in H_z\otimes L$, 
$\langle f, k^z_y\otimes v\rangle = 
\langle Pf, k_y^z\otimes v\rangle =
\langle Pf(y), v\rangle_L.$ 
So, $f = Pf \in H_z\otimes L$.\end{proof}

\begin{lemma}
Let $L_1,L_2$ be Hilbert spaces.
If $\Phi \in \text{Mult}_1(H\otimes L_1, H\otimes L_2)$, 
then
\[
\Phi \in \text{Mult}_1(H_z\otimes L_1, H_z\otimes L_2).
\]
\end{lemma}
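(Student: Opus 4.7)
The plan is to leverage the previous lemma, which characterizes elements of $H_z \otimes L$ as precisely those vector-valued functions that vanish at $z$. This reduces the claim to an essentially one-line verification.

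First I would take an arbitrary $f \in H_z \otimes L_1$. By the previous lemma (applied with $L = L_1$), this is equivalent to $f(z) = \vec{0}$. Since $\Phi$ is a multiplier, it acts pointwise: $(\Phi f)(x) = \Phi(x) f(x)$ for all $x \in X$, and in particular
\[
(\Phi f)(z) = \Phi(z) f(z) = \Phi(z) \vec{0} = \vec{0}.
\]
Applying the previous lemma in the reverse direction, now with $L = L_2$, yields $\Phi f \in H_z \otimes L_2$. This establishes that $\Phi$ maps $H_z \otimes L_1$ into $H_z \otimes L_2$.

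For the norm bound, I would simply observe that $H_z \otimes L_i$ is a closed subspace of $H \otimes L_i$, so the tensor product norms agree on $H_z \otimes L_i$. Therefore
\[
\|\Phi f\|_{H_z \otimes L_2} = \|\Phi f\|_{H \otimes L_2} \leq \|f\|_{H \otimes L_1} = \|f\|_{H_z \otimes L_1},
\]
with the middle inequality being the hypothesis that $\Phi \in \text{Mult}_1(H\otimes L_1, H\otimes L_2)$. There is no serious obstacle here; the previous two lemmas have already done all of the work, and the content of this lemma is just the compatibility of multiplication by $\Phi$ with the vanishing condition at $z$.
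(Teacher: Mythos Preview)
Your proof is correct and follows essentially the same approach as the paper: use the previous lemma to translate membership in $H_z\otimes L$ into the vanishing condition at $z$, note that $(\Phi f)(z)=\Phi(z)f(z)=\vec{0}$, and then inherit the norm bound from the inclusion $H_z\otimes L_1\subset H\otimes L_1$. The paper's version is simply a terser write-up of the same argument.
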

\begin{proof}
If $f\in H_z\otimes L_1$ then $\Phi f \in H\otimes L_2$ and 
$\Phi(z)f(z) = \vec{0}$ so that $\Phi f \in H_z\otimes L_2$.
The inequality $\|\Phi f\| \leq \|f\|$ holds for $f\in H_z\otimes L_1\subset H\otimes L$
so $\Phi \in \text{Mult}_1(H_z\otimes L_1,H_z\otimes L_2)$.
\end{proof}

The following corollary is the PSD kernel interpretation of the
above lemma.

\begin{corollary} \label{multcor}
If $\Phi: X \to B(L_1, L_2)$ is a function such that
$(I-\Phi(x) \Phi(y)^*)k(x,y)$ is PSD then $(I-\Phi(x)\Phi(y)^*) k^z(x,y)$
is PSD.
\end{corollary}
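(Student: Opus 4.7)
The plan is to observe that this corollary is essentially a dictionary translation of the previous lemma, using the equivalence (stated in the introduction) between $\Phi$ being a contractive multiplier on a reproducing kernel Hilbert space with kernel $\kappa$ and the operator-valued kernel $(I - \Phi(x)\Phi(y)^*)\kappa(x,y)$ being PSD.

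The execution will go as follows. First I would unpack the hypothesis: the PSD condition on $(I-\Phi(x)\Phi(y)^*)k(x,y)$ is exactly the statement that $\Phi \in \text{Mult}_1(H \otimes L_1, H \otimes L_2)$. Applying the previous lemma then yields $\Phi \in \text{Mult}_1(H_z \otimes L_1, H_z \otimes L_2)$. By Lemma \ref{formula}, the reproducing kernel of $H_z$ is precisely $k^z$, so invoking the multiplier--kernel equivalence again, but now for the kernel $k^z$, gives that $(I - \Phi(x)\Phi(y)^*) k^z(x,y)$ is PSD, which is the desired conclusion.

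There is no substantive obstacle here: the only thing to be careful about is that the multiplier--kernel equivalence applies to vector-valued reproducing kernel Hilbert spaces with an arbitrary PSD kernel, and in particular to $H_z$ with its reproducing kernel $k^z$, which is ensured by Lemma \ref{formula}. Since everything in sight is a direct reformulation, the proof will be a one or two line chain of equivalences.
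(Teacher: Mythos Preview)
Your proposal is correct and matches the paper's treatment exactly: the paper simply states that this corollary is ``the PSD kernel interpretation of the above lemma'' and gives no further proof, which is precisely the dictionary translation you describe via the multiplier--kernel equivalence together with Lemma~\ref{formula}.
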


We now assume $k$ is non-vanishing and that
the complete Pick property holds for $k$.
We proceed to show $F_z$ from the statement of Theorem \ref{mainthm} is PSD. 
For $x,z\in X$, $F_z(x,x) = 1 - \frac{|k(x,z)|^2}{k(x,x)k(z,z)}\geq 0$ by Cauchy-Schwarz.  
Now take any $x_1,\dots, x_{N}, x_{N+1} \in X$ where $N \geq 2$.
Assuming $( F_{x_N}(x_i,x_j))_{i,j=1,\dots, N-1} \geq 0$
we will show
$( F_{x_{N+1}}(x_i,x_j))_{i,j=1,\dots, N} \geq 0$.
Note that since $F_{x_N}(x_i,x_N)=0$, 
the matrix expanded with zeros 
$A := ( F_{x_N}(x_i,x_j))_{i,j=1,\dots, N}$ is PSD.
Factor the entries of $A$ as $A_{i,j} = v_i v_j^*$ using row vectors $v_i$.
For simplicity we will write $k_{ij} = k(x_i,x_j)$ below.
Note that
\[
(1-A_{i,j}) = (1-v_i v_j^*) = \frac{k_{iN} k_{Nj}}{k_{NN} k_{ij}}
\]
so that
$(1-v_iv_j^*)k_{ij} = \frac{k_{iN} k_{Nj}}{k_{NN}}$
is rank one and PSD.
By the complete Pick property, there exists a contractive multiplier $\Phi$
with $\Phi(x_i)= v_i$.
By Corollary \ref{multcor} applied to $z=x_{N+1}$,
\[
(1-v_iv_j^*) \left(k_{i,j} - \frac{k_{i,N+1} k_{N+1,j}}{k_{N+1, N+1}}\right) 
=
\frac{k_{iN} k_{Nj}}{k_{NN}} F_{x_{N+1}}(x_i,x_j) \text{ is PSD.}
\]
Here $i,j=1,\dots, N$.
The matrix $\left(\frac{k_{NN}}{k_{iN} k_{Nj}}\right)_{i,j}$
is rank one and PSD so by the Schur product theorem
we see that $(F_{x_{N+1}}(x_i,x_j))_{i,j=1,\dots N}$ is PSD.
By induction, this proves that $F_z(x,y)$ is PSD for all $z\in X$.

\begin{bibdiv}
\begin{biblist}

\bib{AM2000}{article}{
   author={Agler, Jim},
   author={McCarthy, John E.},
   title={Complete Nevanlinna-Pick kernels},
   journal={J. Funct. Anal.},
   volume={175},
   date={2000},
   number={1},
   pages={111--124},
   issn={0022-1236},
   review={\MR{1774853}},
   doi={10.1006/jfan.2000.3599},
}

\bib{AMbook}{book}{
   author={Agler, Jim},
   author={McCarthy, John E.},
   title={Pick interpolation and Hilbert function spaces},
   series={Graduate Studies in Mathematics},
   volume={44},
   publisher={American Mathematical Society, Providence, RI},
   date={2002},
   pages={xx+308},
   isbn={0-8218-2898-3},
   review={\MR{1882259}},
   doi={10.1090/gsm/044},
}

\bib{Aleman}{article}{
   author={Aleman, Alexandru},
   author={Hartz, Michael},
   author={McCarthy, John E.},
   author={Richter, Stefan},
   title={Interpolating sequences in spaces with the complete Pick property},
   journal={Int. Math. Res. Not. IMRN},
   date={2019},
   number={12},
   pages={3832--3854},
   issn={1073-7928},
   review={\MR{3973111}},
   doi={10.1093/imrn/rnx237},
}

\bib{BTV}{article}{
   author={Ball, Joseph A.},
   author={Trent, Tavan T.},
   author={Vinnikov, Victor},
   title={Interpolation and commutant lifting for multipliers on reproducing
   kernel Hilbert spaces},
   conference={
      title={Operator theory and analysis},
      address={Amsterdam},
      date={1997},
   },
   book={
      series={Oper. Theory Adv. Appl.},
      volume={122},
      publisher={Birkh\"{a}user, Basel},
   },
   date={2001},
   pages={89--138},
   review={\MR{1846055}},
}

\bib{CH}{article}{
   author={Clou\^{a}tre, Rapha\"{e}l},
   author={Hartz, Michael},
   title={Multiplier algebras of complete Nevanlinna-Pick spaces: dilations,
   boundary representations and hyperrigidity},
   journal={J. Funct. Anal.},
   volume={274},
   date={2018},
   number={6},
   pages={1690--1738},
   issn={0022-1236},
   review={\MR{3758546}},
   doi={10.1016/j.jfa.2017.10.008},
}

\bib{Quiggin1}{article}{
   author={Quiggin, Peter},
   title={For which reproducing kernel Hilbert spaces is Pick's theorem
   true?},
   journal={Integral Equations Operator Theory},
   volume={16},
   date={1993},
   number={2},
   pages={244--266},
   issn={0378-620X},
   review={\MR{1205001}},
   doi={10.1007/BF01358955},
}

\bib{JM}{article}{
   author={Jury, Michael T.},
   author={Martin, Robert T. W.},
   title={Factorization in weak products of complete pick spaces},
   journal={Bull. Lond. Math. Soc.},
   volume={51},
   date={2019},
   number={2},
   pages={223--229},
   issn={0024-6093},
   review={\MR{3937583}},
   doi={10.1112/blms.12222},
}

\bib{Marx}{thesis}{
author = {Marx, Gregory},
title={The Complete Pick Property and Rerproducing Kernel Hilbert Spaces},
type={Master's Thesis},
date={2013},
organization={Virginia Tech},
}

\bib{McC}{article}{
   author={McCarthy, John E.},
   title={Pick's theorem---what's the big deal?},
   journal={Amer. Math. Monthly},
   volume={110},
   date={2003},
   number={1},
   pages={36--45},
   issn={0002-9890},
   review={\MR{1952746}},
   doi={10.2307/3072342},
}

\bib{McCullough1}{article}{
   author={McCullough, Scott},
   title={Carath\'{e}odory interpolation kernels},
   journal={Integral Equations Operator Theory},
   volume={15},
   date={1992},
   number={1},
   pages={43--71},
   issn={0378-620X},
   review={\MR{1134687}},
   doi={10.1007/BF01193766},
}

\bib{McCullough2}{article}{
   author={McCullough, Scott},
   title={The local de Branges-Rovnyak construction and complete
   Nevanlinna-Pick kernels},
   conference={
      title={Algebraic methods in operator theory},
   },
   book={
      publisher={Birkh\"{a}user Boston, Boston, MA},
   },
   date={1994},
   pages={15--24},
   review={\MR{1284929}},
}

\bib{PR}{book}{
   author={Paulsen, Vern I.},
   author={Raghupathi, Mrinal},
   title={An introduction to the theory of reproducing kernel Hilbert
   spaces},
   series={Cambridge Studies in Advanced Mathematics},
   volume={152},
   publisher={Cambridge University Press, Cambridge},
   date={2016},
   pages={x+182},
   isbn={978-1-107-10409-9},
   review={\MR{3526117}},
   doi={10.1017/CBO9781316219232},
}

\bib{Pick}{article}{
   author={Pick, Georg},
   title={\"{U}ber die Beschr\"{a}nkungen analytischer Funktionen, welche durch
   vorgegebene Funktionswerte bewirkt werden},
   language={German},
   journal={Math. Ann.},
   volume={77},
   date={1915},
   number={1},
   pages={7--23},
   issn={0025-5831},
   review={\MR{1511844}},
   doi={10.1007/BF01456817},
}

\end{biblist}
\end{bibdiv}

\end{document}